\newtheorem{thm}{Theorem}[section]
\newtheorem{cor}[thm]{Corollary}
\newtheorem{lem}[thm]{Lemma}
\newtheorem{conj}[thm]{Conjecture}
\begin{document}
\bibliographystyle{amsplain}
\title[groups of prime power order]{Non-commuting graphs of  nilpotent groups}
\author{Alireza Abdollahi}
\address{Department  of Mathematics, University of Isfahan, Isfahan 81746-73441, Iran;  \\
School of Mathematics, Institute for Research in Fundamental
Sciences (IPM), P.O.Box 19395-5746 , Tehran, Iran.}
\email{a.abdollahi@math.ui.ac.ir}
\author{Hamid Shahverdi}
\address{School of Mathematics, Institute for research in fundamental science (IPM), P.O. Box 19395-5746, Tehran, Iran.
Iran.}
\email{hamidshahverdi@gmail.com}
\subjclass[2000]{20D15; 20D60}
\keywords{Non-commuting graph; nilpotent groups; graph isomorphism; groups with abelian centralizers}
\begin{abstract}
Let $G$ be a non-abelian group and $Z(G)$ be the center of $G$.
The non-commuting graph $\Gamma_G$ associated to $G$ is
the graph whose vertex set is $G\setminus Z(G)$ and  two distinct elements  $x,y$ are adjacent  if and only if $xy\neq yx$.
We prove that if $G$ and $H$ are non-abelian nilpotent groups with irregular isomorphic non-commuting graphs, then $|G|=|H|$.
\end{abstract}
\maketitle
\section{\bf Introduction and results}
 Let $G$ be a non-abelian group and $Z(G)$ be its center. The non-commuting graph $\Gamma_G$ of $G$ is a graph whose vertex set  is $G\setminus Z(G)$ and two vertices $x$ and $y$ are adjacent if and only if $xy\neq yx$. The non-commuting
graph of a group was first considered by Paul Erd\H{o}s in 1975 \cite{N}. Many people have  studied the non-commuting graph (e.g., \cite{AADS,AAM,D,SW,WS}).
In \cite{AAM} the following conjecture was put forward:
\begin{conj}[Conjecture 1.1 of \cite{AAM}]\label{con1}
Let $G$ and $H$ be two finite non-abelian groups such that $\Gamma_G\cong \Gamma_H$. Then $|G|=|H|$.
\end{conj}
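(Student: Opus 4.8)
The plan is to read group-theoretic data off the graph and then exploit the nilpotent decomposition. First I would record the two elementary dictionaries: the vertex set has size $|V(\Gamma_G)|=|G|-|Z(G)|$, and the degree of the vertex $x$ is $\deg(x)=|G|-|C_G(x)|$, since $x$ fails to commute with exactly the $|G|-|C_G(x)|$ elements outside its centralizer (all of which are automatically non-central). From a graph isomorphism $\Gamma_G\cong\Gamma_H$ I therefore get $|G|-|Z(G)|=|H|-|Z(H)|$ together with an equality of degree multisets. Two consequences are used repeatedly: the multiset of pairwise differences $\deg(x)-\deg(y)=|C_G(y)|-|C_G(x)|$ is a graph invariant, so the centralizer sizes are known up to the single additive unknown $|G|$; and since $Z(G)\le C_G(x)$ and $|Z(G)|\mid|G|$, the order $|Z(G)|$ divides every degree and divides the vertex count.

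Next I would bring in nilpotency. Writing $G=P_1\times\cdots\times P_r$ as the product of its Sylow subgroups, centralizers, centers and conjugacy-class sizes all factor through the direct product, so the set of class sizes of $G$ is the product set of those of the factors. In particular the smallest non-central class size $s_1=\min_{x\notin Z(G)}|G:C_G(x)|$ is the least of the minimal class sizes of the non-abelian Sylow factors. A vertex of minimum degree $m$ is one whose centralizer is largest, so its centralizer has order $c_{\max}=|G|/s_1$ and $m=|G|-c_{\max}=|G|(s_1-1)/s_1$. This yields the clean identity
\[
|G|=\frac{m\,s_1}{s_1-1},
\]
and since $t\mapsto t/(t-1)$ is injective for $t>1$, the equality $|G|=|H|$ is \emph{equivalent} to $s_1(G)=s_1(H)$. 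Everything thus reduces to showing that the minimal non-central class size is a graph invariant.

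To attack that I would treat the largest centralizer $c_{\max}$ as the single unknown, $m$ and all degrees being known. From $c_{\max}\mid|G|=m+c_{\max}$ one obtains $c_{\max}\mid m$, so $c_{\max}$ is a divisor of the known minimum degree and $s_1=m/c_{\max}+1$. More generally, writing the distinct centralizer orders as $c_{\max}=c^{(1)}>\cdots>c^{(r)}$ with graph-known gaps $\delta_i=c_{\max}-c^{(i)}$, each $c^{(i)}=c_{\max}-\delta_i$ must divide $|G|=m+c_{\max}$, equivalently $(c_{\max}-\delta_i)\mid(m+\delta_i)$. The plan is then to show that these divisibility conditions, together with the requirement that each quotient $(m+c_{\max})/c^{(i)}$ be a genuine nilpotent class size, namely a product of prime powers built from the primes dividing $|G|$ with the multiplicative structure coming from the Sylow factorisation, single out the divisor $c_{\max}$ of $m$ uniquely, hence determine $s_1$ and $|G|$. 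As both $G$ and $H$ realise the same graph, this forces $s_1(G)=s_1(H)$ and therefore $|G|=|H|$.

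The main obstacle is precisely this last step: proving that the number-theoretic and multiplicative constraints pin down $c_{\max}$ (equivalently $s_1$) uniquely among the divisors of $m$. This is exactly where the hypotheses enter. Irregularity guarantees $m<M$, so there are at least two distinct centralizer orders and hence a nontrivial gap $\delta$ with $(c_{\max}-\delta)\mid(m+\delta)$ to exploit; in the excluded regular case all non-central centralizers coincide, only the bare relation $|G|=m\,s_1/(s_1-1)$ survives, and the present invariants genuinely fail to fix $|G|$. Nilpotency is what turns the class-size data into the rigid product-set structure that I expect to force uniqueness; I would carry out the uniqueness analysis by comparing the candidate profiles $\{s_i(G)\}$ and $\{s_i(H)\}$ and running a minimality argument on the smallest class size against the divisibility relations above.
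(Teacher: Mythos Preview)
First, note that the statement you are addressing --- Conjecture~1.1 in full generality --- is false: the paper does not prove it and explicitly cites Isaacs' counterexample. Your proposal invokes nilpotency and irregularity as hypotheses throughout, so you are really attempting the paper's Theorem~1.2, and I compare against that proof.

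Your argument has a genuine gap at the self-identified ``main obstacle''. You correctly reduce to showing that $s_1=\min_{x\notin Z(G)}|x^G|$ is determined by the graph, but then you only list divisibility constraints on the unknown $c_{\max}$ (namely $c_{\max}\mid m$ and $(c_{\max}-\delta_i)\mid(m+\delta_i)$) and assert that, together with the product-set structure of nilpotent class sizes, they will pin down $c_{\max}$. You never prove this, and it is exactly where the content lies. The paper's Case~(d) shows why the step is hard: when $G=P\times A$ and $H=Q\times B$ with $P$ a non-abelian $p$-group, $Q$ a non-abelian $q$-group, $p\ne q$, and $A,B$ abelian of coprime order, equating vertex counts and degrees yields after manipulation
\[
\frac{p^{\,n-r}-1}{p^{\,u}-1}=\frac{q^{\,m-s}-1}{q^{\,v}-1}\quad\text{and}\quad\frac{p^{\,n-a_i-r}-1}{p^{\,u}-1}=\frac{q^{\,m-b_i-s}-1}{q^{\,v}-1}
\]
for at least two distinct exponents $a_i$ (irregularity is what supplies the second equation). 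Ruling out such a coincidence is not a routine divisibility fact; the paper invokes a theorem of He--Togb\'e on the Goormaghtigh equation $\tfrac{y^n-1}{y-1}=\tfrac{x^m-1}{x-1}$ to finish. Your proposed ``minimality argument on the smallest class size against the divisibility relations'' does not supply this number-theoretic input, and without something of comparable strength the uniqueness of $c_{\max}$ remains unproved.

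The paper's overall architecture is also different from yours: it does not try to extract a single graph invariant such as $s_1$. It first disposes of groups with at least two non-abelian Sylow subgroups via a centralizer-size inequality, reduces to $G=P\times A$ with $P$ the unique non-abelian Sylow, and then runs a four-case analysis on the shape of $H$, using the subgraph isomorphism $\Gamma_{C_G(g)}\cong\Gamma_{C_H(\phi(g))}$ together with minimal-counterexample arguments in the intermediate cases and the Goormaghtigh input in the final one.
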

Conjecture \ref{con1} was refuted by an example due to Isaacs in \cite{M}, however it is valid whenever one of $G$ or $H$ is a non-abelian
finite simple group \cite{D} or whenever one of $G$ or $H$ has  prime power order \cite{AADS}. The counterexample given in \cite{M} is a pair $(G,H)$
of nilpotent non-abelian groups with regular non-commuting graph; recall that a graph is called regular if the degree of all vertices are the same,
otherwise the graph is called irregular.
It follows from  a result of Ito \cite{Ito} that  a finite group with a regular non-commuting graph is a direct product of a non-abelian $p$-group
for some prime $p$ and an abelian group.\\

Here we study  pairs $(G,H)$ of non-abelian finite groups which provide a counterexample to Conjecture \ref{con1}.
It follows from the main result of  \cite{AADS}, that if  a pair $(G,H)$ provides a counterexample then  none of $G$ and $H$ are of  prime power order.
Here we prove that if a pair of non-abelian finite nilpotent groups  provides a counterexample for Conjecture \ref{con1} then their
non-commuting graphs must be regular.
\begin{thm}\label{thm1} Let $G$ and $H$ be two finite non-abelian  nilpotent groups
with irregular non-commuting graphs such that $\Gamma_G\cong\Gamma_H$. Then $|G|=|H|$.
\end{thm}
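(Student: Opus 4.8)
The plan is to show that $\Gamma_G$ determines $|G|$ for every non-abelian nilpotent group $G$ with irregular non-commuting graph; applied to both $G$ and $H$ this yields $|G|=|H|$. Since $\deg_{\Gamma_G}(x)=|G|-|C_G(x)|$, the graph $\Gamma_G$ is regular precisely when $|C_G(x)|$ is constant on $G\setminus Z(G)$, and (by the result of Ito recalled above) a nilpotent group with regular non-commuting graph has the form $P\times A$ with $P$ a $p$-group in which all non-central elements have equal centralizer order. As $|V(\Gamma_G)|=|G|-|Z(G)|$, proving $|G|=|H|$ is the same as proving $|Z(G)|=|Z(H)|$, so the whole problem is to recover $|Z(G)|$ from the graph.

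First I would exploit nilpotency. Writing $G=\prod_p G_p$ for the decomposition into Sylow subgroups, one has $Z(G)=\prod_p Z(G_p)$ and $C_G(x)=\prod_p C_{G_p}(x_p)$, and two elements commute iff they commute in every coordinate. Two consequences drive everything. (i) For $x\notin Z(G)$, the order $|C_G(x)|$ is $|G|$ divided by the conjugacy class size of $x$, which is a product of prime powers; in particular $|Z(G)|$ divides $|C_G(x)|$ and divides $|G|$. (ii) The relation $x\approx y:\Longleftrightarrow C_G(x)=C_G(y)$ on $G\setminus Z(G)$ has as its classes exactly the false-twin classes of $\Gamma_G$ (vertices with the same closed neighbourhood); these classes are unions of cosets of $Z(G)$, and by uniqueness of the Sylow factorisation of $C_G(x)$ they are direct products, over the primes, of the analogous objects in the $G_p$. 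Hence the weighted reduced graph $\widehat\Gamma_G$ — collapse each false-twin class to a vertex carrying its size as a weight — is an isomorphism invariant of $\Gamma_G$, and it is built from the weighted reduced graphs of the non-abelian Sylow subgroups by a co-normal-product (disjunction) construction, the abelian Sylow subgroups only rescaling all weights by their orders. In this language $\Gamma_G\cong\Gamma_H$ is equivalent to $\widehat\Gamma_G\cong\widehat\Gamma_H$ as weighted graphs, and $|Z(G)|$ is the weight of the unique phantom isolated vertex one must adjoin to $\widehat\Gamma_G$ to complete the disjunction.

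The core step is to recover that weight, and I would do it by induction on the number $r\ge 1$ of non-abelian Sylow subgroups of $G$. Since $\Gamma_G$ is irregular, either $r\ge 2$, or $r=1$ and $\Gamma_P$ is irregular for the unique non-abelian Sylow $P$ (this refinement of Ito follows from (i)--(ii): elements that are central outside a single Sylow already force irregularity as soon as two Sylow subgroups are non-abelian). When $r=1$ we have $G=P\times A$ and $\Gamma_G$ is the blow-up of the irregular graph $\Gamma_P$ obtained by replacing each vertex by $|A|$ false twins; here one must recover $|P|$ and $|A|$ from $\Gamma_G$, and the decisive tool is the prime-power case of Conjecture \ref{con1} established in \cite{AADS}: among the ways of realising $\Gamma_G$ as a blow-up of a smaller graph, only one de-blow-up is the non-commuting graph of an actual $p$-group, which pins down $|P|$, hence $|A|$, hence $|Z(G)|=|A|\,|Z(P)|$. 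For $r\ge 2$ I would use the direct-product description of the false-twin classes to peel off one non-abelian Sylow $P$: its contribution to $\widehat\Gamma_G$ can be separated, the $r=1$ (prime-power) information applied to it, the induction hypothesis applied to the rest, and the orders multiplied back together; the same reasoning applied to $H$ then gives $|Z(G)|=|Z(H)|$.

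The main obstacle is precisely this extraction. The blow-up factor $|A|$ — and, more generally, the splitting of $\Gamma_G$ into its per-prime pieces — is not read off by any purely combinatorial device, since a given weighted reduced graph a priori admits several scaled-disjunction factorisations. One is forced to combine the arithmetic constraints (divisibility by $|Z(G)|$, class sizes being products of prime powers associated to distinct primes, all orders dividing $|G|$) with \cite{AADS} in order to certify that exactly one value of $|Z(G)|$ is consistent, and to control how several non-abelian Sylow subgroups interact inside the disjunction. That bookkeeping is where the real work of the proof lies.
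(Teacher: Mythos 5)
Your proposal is a strategy outline rather than a proof, and the step you yourself flag as ``where the real work of the proof lies'' is precisely the step the paper has to work hardest to supply; leaving it open is a genuine gap, not bookkeeping. Two concrete failures. First, the cross-prime case: if $G=P\times A$ and $H=Q\times B$ with $P$ a non-abelian $p$-group, $Q$ a non-abelian $q$-group, $p\neq q$, and $A,B$ abelian of coprime order, then $\Gamma_G$ is an $|A|$-fold blow-up of $\Gamma_P$ and $\Gamma_H$ a $|B|$-fold blow-up of $\Gamma_Q$, and nothing in your argument rules out these being isomorphic with $|G|\neq|H|$. Your appeal to the prime-power result of \cite{AADS} does not help here, because that result compares a $p$-group with another group of the \emph{same} order profile, whereas here the two candidate de-blow-ups live at different primes. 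The paper disposes of this case (its Case (d)) only by deriving the simultaneous equations $\frac{p^{n-r}-1}{p^u-1}=\frac{q^{m-s}-1}{q^v-1}$ and $\frac{p^{n-a_i-r}-1}{p^u-1}=\frac{q^{m-b_i-s}-1}{q^v-1}$ and invoking the He--Togb\'e theorem on the Goormaghtigh equation, which says the equation $\frac{y^n-1}{y-1}=\frac{x^m-1}{x-1}$ has at most one solution pair $(m,n)$ for fixed $(x,y)$; irregularity supplies the two distinct class sizes needed to get two such equations and hence the contradiction. This is a nontrivial arithmetic input with no substitute in your write-up.

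Second, even in the same-prime case your assertion that ``only one de-blow-up is the non-commuting graph of an actual $p$-group'' is unproved and is not what \cite{AADS} gives you; the paper instead extracts $|A|=|B|$ elementarily by subtracting two degree equations coming from two distinct conjugacy class sizes (its equation (3)), which is where irregularity enters quantitatively. Relatedly, your claim that the false-twin-reduced weighted graph factors as a co-normal product over the Sylow subgroups in an essentially unique way, so that one can ``peel off'' one non-abelian Sylow and induct, would itself need a uniqueness-of-factorization argument for such products that you do not supply; the paper avoids this entirely by showing (Theorem 2.4, via the inequality $|C_G(x)|^2\geq |G||Z(G)|$ for a suitable $x$ and Lemma 2.2(2)) that two non-abelian Sylow subgroups already force $|G|=|H|$, and then running a case analysis on the shape of $H$ with $p$-adic valuation arguments (its Cases (b) and (c)). Your high-level picture of the graph as a blown-up product is accurate, but none of the three places where the theorem could actually fail is closed by the proposal.
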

We conjecture that the word ``nilpotent" in  Theorem \ref{thm1} is sufficient for one of the groups $G$ and $H$.

\section{\bf Non-commuting graphs of nilpotent groups}
A non-abelian group is called an $AC$-group if the centralizer of every non-central element is abelian.
For a group $G$ and an element $g\in G$,  $g^G$ denotes the conjugacy class of $g$ in $G$.
\begin{lem}
Let $G$ and $H$ be two finite non-abelian groups. If $\phi:\Gamma_G\rightarrow \Gamma_H$ is a graph isomorphism and $g$
is a non-central element of $G$, then the following hold:
\begin{enumerate}
\item $|G|-|Z(G)|=|H|-|Z(H)|$.
\item $|G|-|C_G(g)|=|H|-|C_H(\phi(g))|$.
\item $|C_G(g)|-|Z(C_G(g))|=|H|-|Z(C_H(\phi(g)))|$, where $C_G(g)$ is not abelian.
\item If $C_G(g)$ is not abelian, then $\Gamma_{C_G(g)}\cong \Gamma_{C_H(\phi(g))}$.
\item Suppose that $C_1=C_G(g_1)$ and $C_i=C_{C_{i-1}}(g_i)$ for $i\geq 2$, where $g_1\in G\setminus Z(G)$ and $g_i\in C_{i-1}\setminus Z(C_{i-1})$.
Then there exists $k\in\mathbb{N}$ such that $C_k$ is an $AC$-group.
\item $|G|=|H|$ if and only if $|C_G(g)|=|C_H(\phi(g))|$ if and only if $|Z(G)|=|Z(H)|$.
\end{enumerate}
\end{lem}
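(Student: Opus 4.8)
The plan is to deduce everything from two elementary features of a graph isomorphism — it preserves the number of vertices and it preserves degrees — together with the sharper observation that it preserves induced subgraphs on vertex sets admitting a purely graph-theoretic description.

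\emph{Parts (1) and (2).} The vertex set of $\Gamma_G$ has $|G|-|Z(G)|$ elements, so bijectivity of $\phi$ gives (1). For a vertex $x$ of $\Gamma_G$ one has $x=g$ or $x$ non-adjacent to $g$ exactly when $x\in C_G(g)$; since $Z(G)\subseteq C_G(g)$, the set of all such $x$ is $C_G(g)\setminus Z(G)$, of size $|C_G(g)|-|Z(G)|$. Hence $\deg_{\Gamma_G}(g)=\bigl(|G|-|Z(G)|\bigr)-\bigl(|C_G(g)|-|Z(G)|\bigr)=|G|-|C_G(g)|$. As isomorphisms preserve degrees, carrying out the same computation in $H$ yields $|G|-|C_G(g)|=|H|-|C_H(\phi(g))|$, which is (2).

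\emph{Parts (3) and (4).} Put $D:=C_G(g)\setminus Z(G)$, which by the previous paragraph is precisely $g$ together with the non-neighbors of $g$ in $\Gamma_G$; since this description is preserved by $\phi$, we get $\phi(D)=C_H(\phi(g))\setminus Z(H)$, and $\phi$ restricts to an isomorphism of the induced subgraph of $\Gamma_G$ on $D$ onto the induced subgraph of $\Gamma_H$ on $\phi(D)$. Now $Z(G)\subseteq Z(C_G(g))$, and every vertex in $Z(C_G(g))\setminus Z(G)$ commutes with all of $C_G(g)$, hence is isolated in the induced subgraph on $D$; on the remaining vertices $C_G(g)\setminus Z(C_G(g))$ the adjacency is exactly that of $\Gamma_{C_G(g)}$. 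So the induced subgraph on $D$ is $\Gamma_{C_G(g)}$ together with $|Z(C_G(g))|-|Z(G)|$ isolated vertices, and likewise the induced subgraph on $\phi(D)$ is $\Gamma_{C_H(\phi(g))}$ together with $|Z(C_H(\phi(g)))|-|Z(H)|$ isolated vertices. When $C_G(g)$ is non-abelian, $\Gamma_{C_G(g)}$ has at least one edge and no isolated vertex (a non-central element of $C_G(g)$ fails to commute with some element of $C_G(g)$, which is then also non-central in $C_G(g)$); transporting that edge through $\phi$ forces $C_H(\phi(g))$ to be non-abelian, so $\Gamma_{C_H(\phi(g))}$ also has no isolated vertex. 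An isomorphism between two graphs, each a disjoint union of an isolated-vertex-free graph with finitely many isolated vertices, must carry isolated vertices to isolated vertices; it therefore restricts to an isomorphism $\Gamma_{C_G(g)}\cong\Gamma_{C_H(\phi(g))}$, which is (4), and comparing the numbers of non-isolated vertices gives $|C_G(g)|-|Z(C_G(g))|=|C_H(\phi(g))|-|Z(C_H(\phi(g)))|$, the equality intended in (3).

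\emph{Parts (5) and (6).} For (5): since $g_i\notin Z(C_{i-1})$, some element of $C_{i-1}$ does not centralize $g_i$, so $C_i=C_{C_{i-1}}(g_i)$ is a proper subgroup of $C_{i-1}$; as $G$ is finite, the strictly descending chain $C_1\supsetneq C_2\supsetneq\cdots$ cannot continue indefinitely, and it can only stop at a $C_k$ in which every non-central element has abelian centralizer, i.e.\ at an $AC$-group. For (6): rewriting (1) as $|G|-|H|=|Z(G)|-|Z(H)|$ and (2) as $|G|-|H|=|C_G(g)|-|C_H(\phi(g))|$ shows at once that the three conditions $|G|=|H|$, $|Z(G)|=|Z(H)|$ and $|C_G(g)|=|C_H(\phi(g))|$ are pairwise equivalent. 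I expect the only point needing care to be the bookkeeping in (3)–(4): one must first establish that $C_H(\phi(g))$ is non-abelian before the two isolated-vertex counts can be matched, and it is exactly there that the hypothesis that $C_G(g)$ is non-abelian is used.
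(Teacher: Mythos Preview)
Your argument is correct and supplies the details the paper omits under ``It is straightforward''; you also rightly observe that the printed statement of (3) has a typo (the $|H|$ on the right should be $|C_H(\phi(g))|$). The only point of divergence is (5): you let the chain run until no $g_{k+1}\in C_k\setminus Z(C_k)$ exists, i.e.\ until $C_k$ is abelian, and then count an abelian group as vacuously satisfying the AC condition. The paper's definition of AC-group requires non-abelian, and its argument is accordingly slightly different: at each stage, as long as $C_i$ is not an AC-group one \emph{chooses} $g_{i+1}$ to be a non-central element of $C_i$ with non-abelian centralizer, so that every $C_i$ in the chain is non-abelian and the (finite, strictly descending) chain must terminate at a non-abelian AC-group. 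Your version proves the statement under the vacuous reading; the paper's choice of the $g_i$'s is what is needed for the stated definition.
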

\begin{proof}
It is straightforward. To prove (5), note that if the centralizer $C_i$ is not an $AC$-group, then some proper centralizer in $C_i$ is not abelian
guaranteeing  the existence of an element $g_{i+1}$. On the other hand, $G$ is finite so the series $C_1>C_2>\cdots>C_i>\cdots$ will  eventually terminate  in
an $AC$-group.
\end{proof}
\begin{lem}[see e.g. Theorem 2.1 of \cite{AADS}] \label{l1} Let $G$ be a finite non-abelian group and $H$ be a group such that
$\Gamma_G\cong \Gamma_H$.  Then the following hold:
\begin{enumerate}
\item  $|C_H(h)|$ divides $(|g^G| - 1) (|Z(G)| - |Z(H)|)$ for any $g\in G\setminus Z(G)$ and $h=\phi(g)$, where $\phi$ is any graph isomorphism
from $\Gamma_G$ to $\Gamma_H$.  \item If $|Z(G)|\geq |Z(H)|$ and $G$ contains a
non-central element  $g$ such that ${|C_G(g)|}^2\geq |G|\cdot|Z(G)|$, then $|G|=|H|$.
\end{enumerate}
\end{lem}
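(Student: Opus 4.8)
The plan is to derive both statements from the two basic invariants of $\Gamma_G$ recorded in the preceding lemma: that $\Gamma_G$ has exactly $|G|-|Z(G)|$ vertices, and that the degree of the vertex $g$ equals $|G|-|C_G(g)|$. A preliminary remark is that $H$ is automatically finite here: a graph isomorphism gives $|H\setminus Z(H)|=|G\setminus Z(G)|<\infty$, and if $Z(H)$ were infinite then for any non-central $x\in H$ (such $x$ exists since $\Gamma_H\cong\Gamma_G$ is non-empty, so $H$ is non-abelian) the coset $xZ(H)$ would be an infinite subset of $H\setminus Z(H)$, which is absurd; hence $Z(H)$ and $H\setminus Z(H)$ are both finite and so is $H$. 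Now fix a non-central $g\in G$, put $h=\phi(g)$, and abbreviate $z_G=|Z(G)|$, $z_H=|Z(H)|$, $n=|g^G|=[G:C_G(g)]$, $c=|C_G(g)|$, $c_H=|C_H(h)|$. Parts (1) and (2) of the preceding lemma give $|G|-z_G=|H|-z_H$ and $|G|-c=|H|-c_H$; subtracting, $c_H=c-(z_G-z_H)$, and writing $d=z_G-z_H$ we also get $|H|=|G|-d=nc-d$.

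For (1), substitute $c=c_H+d$ into $|H|=nc-d$ to obtain $|H|=n(c_H+d)-d=nc_H+(n-1)d$. Since $c_H=|C_H(h)|$ divides $|H|$ by Lagrange's theorem (this is where finiteness of $H$ is used), it divides $|H|-nc_H=(n-1)d=(|g^G|-1)(|Z(G)|-|Z(H)|)$, which is exactly the asserted divisibility; note that no sign restriction on $d$ is needed.

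For (2), assume now $z_G\ge z_H$, so $d\ge 0$, and suppose $G$ has a non-central $g$ with $c^2\ge |G|\,z_G$; since $|G|=nc$ and $c>0$ this says $c\ge nz_G$. Assume for contradiction that $d>0$. As $g$ is non-central we have $n=|g^G|\ge 2$, so $(n-1)d$ is a positive integer, and by (1) the positive integer $c_H$ divides it, whence $c_H\le (n-1)d$, i.e. $c-d\le (n-1)d$ and so $c\le nd$. But $Z(H)$ contains the identity, so $z_H\ge 1$ and therefore $d=z_G-z_H\le z_G-1$, giving $c\le n(z_G-1)=nz_G-n<nz_G\le c$, a contradiction. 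Hence $d=0$, i.e. $|Z(G)|=|Z(H)|$, and then $|G|-z_G=|H|-z_H$ forces $|G|=|H|$.

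I do not expect a genuine obstacle: the content is bookkeeping around the degree formula. The only two places that need a little care are the reduction to the finite case (so that Lagrange applies in (1) and so that the conclusion of (2) is meaningful) and the strict inequality $d\le z_G-1$ in (2), which is precisely what consumes the hypothesis $c^2\ge|G|\,|Z(G)|$ to close the argument.
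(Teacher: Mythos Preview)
Your proof is correct. The paper itself does not prove this lemma at all; it merely cites it from \cite{AADS}, so there is no in-paper argument to compare against. Your derivation of (1) from $|H|=n c_H+(n-1)d$ and Lagrange, and of (2) from the chain $c\le nd\le n(z_G-1)<nz_G\le c$, is clean and self-contained, and your preliminary reduction showing that $H$ must be finite (via $xZ(H)\subseteq H\setminus Z(H)$ for non-central $x$) correctly handles the fact that the hypothesis only assumes $G$ finite.
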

We need the following  result concerning a number theoretic conjecture due to Goormaghtigh.
\begin{thm}[see e.g. Theorem 1.3 of \cite{HT}] \label{l3}
 Let $x,y,m,n$ be integers such that $y>x>1$ and $m,n>1$. Then the following equation has at most one pair $(m,n)$ of solution for every fixed pair $(x,y)$:
 $$\frac{y^n-1}{y-1}=\frac{x^m-1}{x-1}.$$
 \end{thm}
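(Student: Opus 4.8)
The plan is to argue by contradiction: assume a fixed pair $(x,y)$ with $y>x>1$ admits two \emph{distinct} solutions $(m_1,n_1)$ and $(m_2,n_2)$ with $m_i,n_i>1$. Since each side of the equation is strictly increasing in its exponent, $n_1<n_2$ forces $m_1<m_2$, so we may order the two solutions this way. First I would rearrange the equation into the multiplicative shape $x^m = \tfrac{x-1}{y-1}\,y^n + \tfrac{y-x}{y-1}$, from which the associated inhomogeneous linear form in logarithms
\[
\Lambda_i := m_i\log x - n_i\log y - \log\tfrac{x-1}{y-1}
\]
satisfies $|\Lambda_i| \ll_{x,y} y^{-n_i}$, that is, it is exponentially small in $n_i$. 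In parallel I would record the exact integer identity $(x-1)y^n-(y-1)x^m = x-y$, obtained by cross-multiplying, for later use.

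The key observation is that the inhomogeneous term cancels on subtraction: the difference
\[
L := \Lambda_2-\Lambda_1 = (m_2-m_1)\log x - (n_2-n_1)\log y
\]
is a genuine \emph{two-term} linear form in $\log x$ and $\log y$, with $|L|\le |\Lambda_1|+|\Lambda_2| \ll_{x,y} y^{-n_1}$. I would then invoke two complementary transcendence estimates. Matveev's lower bound, applied to the single form $\Lambda_2$, gives an explicit (if astronomically large) upper bound $\max(m_2,n_2)\le C(x,y)$, so that for each fixed $(x,y)$ the solutions all lie in an effectively computable window. The sharp Laurent--Mignotte--Nesterenko lower bound for linear forms in \emph{two} logarithms, applied to $L$, then yields $\log|L| \ge -c\,(\log x)(\log y)\bigl(\log(m_2-m_1)+\kappa\bigr)^2$.

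Comparing this with the upper estimate $\log|L|\le -n_1\log y + O_{x,y}(1)$ coming from the cancellation above produces an inequality of the form $n_1 \ll (\log x)\bigl(\log(m_2-m_1)\bigr)^2$; feeding in $m_2-m_1\le C(x,y)$ then collapses the admissible range of $(m_1,n_1)$, and hence of all four exponents, to a small explicit set depending only on $(x,y)$. To finish I would eliminate this residual set by elementary means: reducing $(x-1)y^n-(y-1)x^m = x-y$ modulo $x$ and modulo $y$ forces $y^{n-1}\equiv 1 \pmod x$ and $x^{m-1}\equiv 1 \pmod y$, and these congruences together with the size constraints should leave at most one compatible pair, contradicting the assumption. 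As a consistency check, the sporadic equalities $31=\tfrac{2^5-1}{2-1}=\tfrac{5^3-1}{5-1}$ and $8191=\tfrac{2^{13}-1}{2-1}=\tfrac{90^3-1}{90-1}$ occur for \emph{different} base pairs $(2,5)$ and $(2,90)$, so they do not violate the claim.

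The main obstacle is exactly this last reduction: the transcendence bounds are effective but enormous, so the real work lies in making the two-logarithm gap principle quantitatively strong enough that the comparison in the preceding paragraph genuinely crushes the range of exponents to a checkable size \emph{uniformly} in $(x,y)$, rather than leaving an unbounded family. It is precisely the cancellation of $\log\tfrac{x-1}{y-1}$ in passing from the $\Lambda_i$ to the two-term form $L$ that makes the sharper two-variable estimate applicable and renders the whole scheme feasible; arranging the constants so that the final inequality actually closes is the delicate heart of the argument.
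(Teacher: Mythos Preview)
The paper does not supply a proof of this theorem; it is quoted from He--Togb\'e \cite{HT} and invoked as a black box in Case~(d) of the proof of Theorem~\ref{thm1}. There is therefore nothing in the paper to compare your argument against directly.

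Your outline is the correct architecture and is essentially the method of \cite{HT}: bound the exponents in a single solution via a Baker--Matveev estimate on the three-term form $\Lambda_i$, then subtract two putative solutions so that the inhomogeneous term $\log\frac{x-1}{y-1}$ cancels, leaving a homogeneous form in the two logarithms $\log x,\log y$ to which the sharper Laurent--Mignotte--Nesterenko bound applies. You also correctly locate the crux, namely that the resulting inequalities must close \emph{uniformly} over all base pairs $(x,y)$, since the theorem asserts uniqueness for every such pair and one cannot perform a separate finite search for each.

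What you have written, however, is a plan and not a proof. The sentence ``these congruences together with the size constraints should leave at most one compatible pair'' is exactly where the argument either succeeds or does not, and you have not carried it out; in \cite{HT} this endgame requires careful quantitative tracking of the explicit constants in the two-logarithm estimate together with separate elementary arguments for small exponents, and the congruences you cite (which in any case need $\gcd(x,y)=1$ to pass from $y^{n}\equiv y\pmod{x}$ to $y^{n-1}\equiv 1\pmod{x}$) do not by themselves finish the job. So the strategy is right, but the actual proof lies entirely in the execution you have deferred.
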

\begin{thm} \label{t1} Let $G$ be a nilpotent group with at least two distinct non-abelian Sylow subgroups. Suppose also that $H$ is any non-abelian group such that
$|Z(G)|\geq|Z(H)|$ and $\Gamma_G\cong\Gamma_H$. Then $|G|=|H|$.
\end{thm}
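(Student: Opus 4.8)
The plan is to exhibit one non-central element $g\in G$ with $|C_G(g)|^{2}\ge |G|\cdot|Z(G)|$ and then invoke Lemma~\ref{l1}(2): since $|Z(G)|\ge|Z(H)|$ by hypothesis, that lemma yields $|G|=|H|$ at once. (The borderline case $|Z(G)|=|Z(H)|$ is in any event covered by part~(6) of the first lemma of this section, but the argument below does not need to separate it off, and it does not seem to need the Goormaghtigh input of Theorem~\ref{l3}.) Since $G$ is finite nilpotent it is the internal direct product $G=P_1\times\cdots\times P_r$ of its Sylow subgroups, and after reindexing I may take $P_1$ and $P_2$ to be non-abelian. Then $Z(G)=Z(P_1)\times\cdots\times Z(P_r)$, and for $x\in P_i$ the centralizer in $G$ of the element with $i$-th coordinate $x$ and all others trivial is $P_1\times\cdots\times C_{P_i}(x)\times\cdots\times P_r$.

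For $x\in P_1\setminus Z(P_1)$ let $g_x$ be the element of $G$ with first coordinate $x$ and the rest trivial; it is non-central. Since $Z(G)$ is contained in every centralizer, a direct computation with the formulas above shows that $|C_G(g_x)|^{2}\ge |G|\cdot|Z(G)|$ is equivalent to
\[
\bigl|C_{P_1}(x):Z(P_1)\bigr|^{2}\,\prod_{i=2}^{r}\bigl|P_i:Z(P_i)\bigr|\ \ge\ \bigl|P_1:Z(P_1)\bigr|,
\]
with a symmetric criterion for an element of $P_2\setminus Z(P_2)$. Suppose, for contradiction, that neither criterion can ever be met; then for every $x\in P_1\setminus Z(P_1)$ and every $y\in P_2\setminus Z(P_2)$ the reverse strict inequalities hold, and discarding the factors indexed $i\ge 3$ (each at least $1$) gives
\[
\bigl|C_{P_1}(x):Z(P_1)\bigr|^{2}\,\bigl|P_2:Z(P_2)\bigr|<\bigl|P_1:Z(P_1)\bigr|,\qquad
\bigl|C_{P_2}(y):Z(P_2)\bigr|^{2}\,\bigl|P_1:Z(P_1)\bigr|<\bigl|P_2:Z(P_2)\bigr|.
\]
Multiplying these and cancelling $\bigl|P_1:Z(P_1)\bigr|\,\bigl|P_2:Z(P_2)\bigr|$ forces $\bigl|C_{P_1}(x):Z(P_1)\bigr|^{2}\bigl|C_{P_2}(y):Z(P_2)\bigr|^{2}<1$, which is impossible: the centralizer of a non-central element properly contains the centre, so each index here is at least a prime, hence at least $2$. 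Therefore one of the two criteria holds, and the required non-central $g$ exists.

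With such a $g$, Lemma~\ref{l1}(2) completes the proof. The single non-routine point is the middle paragraph. For one non-abelian $p$-group it may genuinely be the case that no non-central element has a large enough centralizer --- for instance in a $p$-group of maximal class a typical non-central element has centralizer of order only $p^{2}$ --- so the hypothesis of two distinct non-abelian Sylow subgroups is used essentially, precisely to run the multiply-and-cancel step across the two factors. I expect that to be the crux; the remaining ingredients are the structure theory of finite nilpotent groups and the bookkeeping of centre-indices in direct products.
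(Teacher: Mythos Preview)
Your proof is correct and follows essentially the same approach as the paper. Both arguments find a non-central element $g$ with $|C_G(g)|^{2}\ge |G|\cdot|Z(G)|$ and then invoke Lemma~\ref{l1}(2); the paper does this by observing directly that for $x\in P\setminus Z(P)$ and $y\in Q\setminus Z(Q)$ one has $|C_G(x)|\,|C_G(y)|>|G|\,|Z(G)|$ (since $|Z(P)|\,|Z(Q)|\,|Z(S)|<|C_P(x)|\,|C_Q(y)|\,|S|$), so that the larger of the two centralizers works, whereas you reach the same conclusion by assuming both centralizers are too small, multiplying the resulting inequalities, and cancelling---this is the identical inequality read contrapositively.
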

\begin{proof} Suppose  $G=P\times Q\times S$, where $P$ and $Q$ are non-abelian Sylow $p$, $q$-subgroups of $G$ such that $p\neq q$ and
 $S$ is a subgroup of $G$. If
$x\in P\setminus Z(P)$ and $y\in Q\setminus Z(Q)$, then
$|Z(G)|< |C_P(x)||C_Q(y)||S|$. Therefore
$$|G||Z(G)|< |C_P(x)||C_Q(y)||P||Q||S|^2=|C_G(x)||C_G(y)|.$$ It follows that
 $$|G||Z(G)|< \max\{|C_G(x)|^2,|C_G(y|^2\}.$$ Now,  Lemma \ref{l1}(2) completes the proof.
\end{proof}
\begin{cor} \label{cor1} Let $G$ and $H$ be two nilpotent groups each of which have at least two non-abelian Sylow subgroups.
 If $\Gamma_G\cong\Gamma_H$, then $|G|=|H|$.
\end{cor}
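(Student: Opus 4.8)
The plan is to obtain this at once from Theorem \ref{t1} by exploiting the symmetry of the hypothesis. First observe that both $G$ and $H$ are non-abelian: a nilpotent group that has a non-abelian Sylow subgroup cannot itself be abelian. Since graph isomorphism is a symmetric relation, $\Gamma_G\cong\Gamma_H$ is the same assertion as $\Gamma_H\cong\Gamma_G$, and the standing hypothesis on $G$ (nilpotent, with at least two non-abelian Sylow subgroups) is identical to the one on $H$. Hence $G$ and $H$ are genuinely interchangeable, and after renaming if necessary we may assume $|Z(G)|\geq|Z(H)|$.

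Now all the hypotheses of Theorem \ref{t1} are in force for the pair $(G,H)$: the group $G$ is nilpotent with at least two distinct non-abelian Sylow subgroups, $H$ is a non-abelian group, $|Z(G)|\geq|Z(H)|$, and $\Gamma_G\cong\Gamma_H$. Applying Theorem \ref{t1} directly yields $|G|=|H|$, which is the desired conclusion.

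There is essentially no obstacle here; the only point deserving a moment's attention is the legitimacy of the reduction to $|Z(G)|\geq|Z(H)|$, which is valid precisely because the corollary's hypotheses are completely symmetric in $G$ and $H$, unlike the one-sided centre condition appearing in Theorem \ref{t1}. All of the real work — the centralizer inequality $|G||Z(G)|<|C_G(x)||C_G(y)|$ for non-central $x\in P$, $y\in Q$ in distinct non-abelian Sylow subgroups, followed by Lemma \ref{l1}(2) — has already been carried out in the proof of Theorem \ref{t1}, so nothing further is needed.
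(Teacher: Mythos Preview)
Your argument is correct and matches the paper's approach: the corollary is stated immediately after Theorem~\ref{t1} without proof, so the intended justification is exactly the symmetry reduction you give---swap $G$ and $H$ if necessary to arrange $|Z(G)|\geq|Z(H)|$, then apply Theorem~\ref{t1}.
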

Both groups $G$ and $H$ in the counterexample of Conjecture \ref{con1} due to Isaacs in \cite{M}   have the same shape, that is, they are  direct products
of a non-abelian  group of prime power order  $P$ and a non-trivial abelian group $A$ such that $\gcd(|P|,|A|)=1$ and all non-trivial
conjugacy class sizes of $G$ or $H$ have equal order.
 The latter property was first studied by Ito \cite{Ito} and  we want to prove Theorem \ref{thm1} for all nilpotent groups except
 those satisfying the latter shape.
\section{\bf Proof of Theorem \ref{thm1}}
Now, we prove Theorem \ref{thm1} in four  cases. In this section $G$ and $H$ are finite non-abelian nilpotent groups with irregular non-commuting graphs and
 $\phi:\Gamma_G\rightarrow \Gamma_H$ is
 a graph isomorphism. By Corollary \ref{cor1}, we may assume that $G$ has exactly one non-abelian Sylow subgroup. If $G$ is of prime power order,
   the main result of \cite{AADS} implies that $|G|=|H|$. Thus we may assume $G=P\times A$, where $P$ is a non-abelian Sylow
 $p$-subgroup of $G$ and $A$ is a non-trivial abelian subgroup whose order is prime to $p$. Also, set $|P|=p^n$ and $|Z(P)|=p^r$.\\

{\bf{Case (a): \; $H=P_1\times B$ for some non-abelian Sylow $p$-subgroup $P_1$ of $H$  and for some non-trivial abelian subgroup $B$ of $H$.}}\\
 We use the following notation:   $|P_1|=p^m$, $|Z(P_1)|=p^s$ and $\phi(g_i)=h_i$, where  $g_1,\dots, g_k$  are non-central elements of $G$ chosen from conjugacy classes of $G$ with pairwise distinct sizes such that $|{g_i}^G|=p^{a_i}$ and $|{h_i}^H|=p^{b_i}$ and $a_1<\dots< a_k$ and $b_1<\dots< b_k$. Notice that $k\geq 2$, since $\Gamma_G$ and $\Gamma_H$ are irregular.\\
 Since $\Gamma_G\cong\Gamma_H$,
\begin{equation} |A|p^r(p^{n-r}-1)=|B|p^s(p^{m-s}-1)\end{equation}
\begin{equation} |A|p^{n-a_i}(p^{a_i}-1)=|B|p^{m-b_i}(p^{b_i}-1)\end{equation}
for every $1\leq i\leq k$.
Equation (1) implies that $r=s$ and equation (2) implies that $n-a_i=m-b_i$.
Since  $\Gamma_G$ is not regular,  graph isomorphism implies that
 \begin{equation} |A|(p^{n-a_1}-p^{n-a_2})=|B|(p^{m-b_1}-p^{m-b_2}).\end{equation} Therefore $|A|=|B|$. Now, equation (2) implies that $a_1=b_1$. Hence $|P|=|P_1|$.\\

{\bf{Case (b): \; $H=P_1\times X$, where   $P_1$ is a non-abelian Sylow $p$-subgroup of $H$  and $X$ is an arbitrary group such that $\gcd(p,|X|)=1$.}}\\
 Suppose $H$ is a minimal
counterexample. Also suppose by way of contradiction that $X$ is a non-abelian group. Then $P_1$ and $X$ are $AC$-group.  Let $x\in X\setminus Z(X)$. Then $C_H(x)=P_1\times B $, where $B\subseteq X$ is an abelian subgroup of $X$. Therefore Case (a)
 implies that $|C_H(x)|=|C_G(\phi^{-1}(x))|$. Since $\Gamma_G\cong\Gamma_H$, we have $|G|=|H|$.  Now, $|G|=|H|$ implies that $|P|=|P_1|$. Set
  $|C_G(\phi^{-1}(x))|=p^{n-\alpha}|A|$ for some integer $1<\alpha<n$. By graph isomorphism, we have
$$(p^n-p^{n-\alpha})|A|=p^n(|X|-|C_X(x)|).$$ The largest $p$-power dividing the right-hand side of the equation is $\geq p^n$
and the left is $p^{n-\alpha}$. This is a contradiction. Hence $X$ is abelian and Case (a) completes the proof.\\

{\bf{ Case (c): \;
  $H=Q_1\times X$, where  $Q_1$ is a Sylow  $q_1$-subgroup of $H$ and $X$ is a non-abelian nilpotent group.}}\\
 If $p=q_1$, then Case (b) completes the proof. We claim that $p\neq q_1$ is not possible.   Let $H$ be a minimal counterexample.
Therefore $Q_1$ and $X$ are  $AC$-groups. By the characterization of $AC$-groups \cite{S},  a nilpotent $AC$-group is a direct product of a non-abelian group of
 prime power order and an abelian group. Therefore $X=Q_2\times B$, where $Q_2$ is a non-abelian $q_2$-group for some prime $q_2$, $B$ is an
  abelian group and $\gcd(|Q_2|,|B|)=1$. Let $h_i\in Q_i\setminus Z(Q_i)$ for $i\in\{1,2\}$. Also, set
$\phi^{-1}(h_i)=g_i$ for $i\in\{1,2\}$ and $|C_G(g_i)|=|A|p^{n-{a_i}}$, where $1<a_i<n$. If $q_2=p$, then again Case (b)
implies that $Q_1\times B$ is an abelian group. This is a contradiction. Therefore  $p\neq q_1,q_2$.

We have $C_H(h_1)=C_{Q_1}(h_1)\times Q_2\times B$ and $C_H(h_2)=Q_1\times C_{Q_2}(h_2)\times B$ and $Z(C_H(h_1))=C_{Q_1}(h)\times Z(Q_2)\times B$ and $Z(C_H(h_2))=Z(Q_1)\times C_{Q_2}(h_1)\times B$. So $Z(H)\subsetneqq Z(C_H(h_i))$. Therefore graph isomorphism implies that $Z(G)\subsetneqq Z(C_G(g_i))$. Set $|Z(C_G(g_i))|=|A|p^{d_i}$ for $i\in\{1,2\}$ and $|Z(G)|=|A|p^r$. It is clear that $d_i>r$. Now, $\Gamma_G\cong\Gamma_H$ and $\Gamma_{C_G(g_i)}\cong\Gamma_{C_H(h_i)}$ for $i\in\{1,2\}$ imply that
\begin{equation} |C_H(h_2)|-|Z(C_H(h_2))|=(|Q_1|-|Z(Q_1)|)|C_{Q_2}(h_2)||B|=|A|(p^{n-a_2}-p^{d_2}) \end{equation}
\begin{equation} |Z(C_H(h_1))|-|Z(H)|=(|C_{Q_1}(h_1)|-|Z(Q_1)|)|Z(Q_2)||B|=|A|(p^{d_1}-p^r) \end{equation}
\begin{equation} |H|-|C_H(h_1)|=(|Q_1|-|C_{Q_1}(h_1)|)|Q_2||B|=|A|(p^n-p^{n-a_1}). \end{equation}
Since $|B|(|Q_1|-|Z(Q_1)|)=|B|(|Q_1|-|C_{Q_1}(h_1)|)+|B|(|C_{Q_1}(h_1)|-|Z(Q_1)|)$,
by equation (5) and (6) the largest $p$-power dividing the right-hand side of the latter equation is $p^r$ and by equation (4) the largest $p$-power dividing the left hand side is $p^{d_2}$. This is a contradiction.\\

{\bf{Case (d): \; $H=Q\times B$, where $Q$ is a non-abelian Sylow $q$-subgroup for some prime $q\neq p$ and $B$ is a non-trivial abelian subgroup.}}\\
Suppose by way of contradiction that $|G|\neq |H|$. Since $\Gamma_G $ is not regular,
there exist $g_1,g_2\in G\setminus Z(G)$ such that $|g_1^G|=p^{a_1}\neq p^{a_2}= |g_2^G|$. Set $|Q|=q^m$, $|Z(Q)|=q^s$, $\phi(g_i)=h_i$ for $i\in \{1,2\}$ and
$|h_i^H|=q^{b_i}$. Since $\Gamma_G\cong \Gamma_H$,
\begin{equation} |A|(p^n-p^r)=|B|(q^m-q^s) \end{equation}
\begin{equation} |A|(p^{n-a_i}-p^r)=|B|(q^{m-b_i}-q^s). \end{equation}
If $u=\gcd(a_1,a_2,n-r)$ and $v=\gcd(b_1,b_2,m-s)$, by considering equations (7) and (8) and taking
greatest common divisors, we have
\begin{equation}
|A|p^r(p^{u}-1)=|B|q^s(q^v-1).
\end{equation}
Now, by dividing  equations  (7) and (9), we have
\begin{equation}\frac{p^{n-r}-1}{p^u-1}=\frac{q^{m-s}-1}{q^v-1}\end{equation}
and by dividing  equations  (8) and (9), we have
\begin{equation}\frac{p^{n-a_i-r}-1}{p^u-1}=\frac{q^{m-b_i-s}-1}{q^v-1}.\end{equation}
Note that it is not possible that  $n-a_1-r=u=n-a_2-r$, since $a_1\neq a_2$.
 Now, Theorem \ref{l3} and equation (10) and (11) yield a contradiction. $\hfill\Box$ \\

\begin{center}{\textbf{Acknowledgments}}
\end{center}
The authors are grateful to the referee for his/her invaluable comments.
The first author was financially supported by the Center of Excellence for Mathematics, University of Isfahan. This research was in
part supported by  grants IPM (No. 91050219) and IPM (No. 91200045).

\end{document}